\newtheorem{theorem}{Theorem}[section]
\newtheorem{lemma}[theorem]{Lemma}
\newcommand{\QED}{\hfill $\blacksquare$}
\def\set#1#2{\{ \; #1 \;:\;#2\;\}} 
\def\Sum#1#2{\sum\limits_{#1}^{#2}}
\newcommand {\bsis} {\left\{ \begin{array} }
\newcommand {\esis} {\end{array}\right.}
\def\conv#1#2{ \left( \begin{array}{c} #1 \\ #2 \\ \end{array}\right)} 
\def\bmat#1{\left[\begin{array}{#1}}
\def\emat{\end{array}\right]}
\def\bv {\bmat{c} }
\def\ev{\emat} 
\def\T{^T}
\def\fracg#1#2{{\displaystyle{\frac{#1}{#2}}}}
\def\Id{{\rm{I}}}
\def\hy{\hat{y}}
\def\rmax#1{{\rm{max}}^{(#1)}}
\def\tx{\tilde{x}}
\def\ty{\tilde{y}}
\newcommand{\blista}{\renewcommand{\labelenumi}{(\roman{enumi})} 
	\begin{enumerate}}
	\newcommand{\elista}{\end{enumerate} \renewcommand{\labelenumi}{\arabic{enumi}.}}
\newcommand{\W}{\mathbb{W}}
\newcommand{\R}{\mathbb{R}}
\newcommand{\D}{\mathbb{D}}
\newcommand{\E}{\mathbb{E}}
\newcommand{\F}{{\cal{F}}}
\def\vv#1{{\rm{#1}}}
\def\trace{{\rm{tr}}\,}
\newcommand{\diag}{{\rm diag}}
\def\NEW#1{\textcolor{black}{#1}}
\newtheorem{definition}{Definition}
\newtheorem{property}{Property}
\newtheorem{remark}{Remark}
\newtheorem{corollary}{Corollary}
\newcommand{\Pw}{{\rm{Pr}_\W}}
\newcommand{\PwN}{{\rm{Pr}_{\W^N}}}
\def\NATS#1{[#1]}
\newcommand{\Bin}{\mathbf{B}}
\title{Prediction error quantification through probabilistic scaling}
\author{Victor Mirasierra\thanks{\,\,\,\,Departamento de Ingenier\'ia de Sistemas y Autom\'atica, Universidad de Sevilla, Escuela Superior de Ingenieros, Sevilla, Spain.},\, Martina Mammarella\thanks{\,\,\,\,CNR-IEIIT; c/o Politecnico di Torino; Corso Duca degli Abruzzi 24, Torino; Italy.},\, Fabrizio Dabbene\footnotemark[2]\, \and Teodoro Alamo\footnotemark[1]}
\begin{document}

\maketitle

\begin{abstract}
In this paper, we address the probabilistic error quantification of a general class of prediction methods. We consider a given prediction model and show how to obtain, through a sample-based approach, a probabilistic upper bound on the absolute value of the prediction error. The proposed scheme is based on a probabilistic scaling methodology in which the number of required randomized samples is independent of the complexity of the prediction model. The methodology is extended to address the case in which the probabilistic uncertain quantification is required to be valid for every member of a finite family of predictors. We illustrate the results of the paper by means of a numerical example.
\end{abstract}


\section{Introduction and Problem Formulation}

Quantifying the error related to the process of approximating a set of given data with a prescribed prediction method represents a fundamental \NEW{requirement}, which has given rise to an entire research area known as \textit{Uncertainty Quantification}, see e.g.~\cite{mcclarren2018:Uncertainty:Quantification,VILLAVERDE201945} and references therein. 

\NEW{Motivated by this necessity, methods for directly constructing predictive models with prescribed robustness guarantees have recently gained popularity. For instance,
\cite{pierreinterval} presents several methods based on interval analysis to construct intervals which are guaranteed to contain the true value, under the assumption of deterministically bounded noise.} \NEW{Similarly,}
 data-based approaches exploiting the availability of random samples, \NEW{providing probabilistic guarantees, are being developed.}  These methods extend classical quantile regression \cite{Quantile:Regression}. In particular, we point out the probabilistic interval predictions proposed in~\cite{campi2009interval,garatti2019class}. 

\NEW{All these methods require to design (or re-design) the estimator using a specific 
ad-hoc model. However, this approach may not result practical when data-analysts have already constructed a model exploiting a ``preferred" technique (e.g., one based on support vector machines (SVM)) and they want to assess, before deployment, the actual uncertainty of their model.}

\NEW{For this reason, researchers have started investigating post-processing methods for quantifying the uncertainty of a \textit{given} predictor. This means that no new methodology is proposed for constructing a regression model but that such a model (or a family of candidate ones) is given. This philosophy is exactly the one pursued in uncertainty quantification methods, see e.g.\ the recent approaches based on polynomial chaos \cite{mcclarren2018:Uncertainty:Quantification}, or the \textit{conformal predictors} \cite{balasubramanian2014conformal}. These methods typically use additional \textit{validation} (or \textit{calibration})  data to determine precise levels of confidence in new predictions \cite{shafer2008tutorial}.}

In this paper, we move a step further in this direction and present sampling-based techniques for assessing the corresponding error in a \textit{computationally} efficient way. Indeed, this novel approach, extending recent results on probabilistic scaling, e.g.,
\cite{Alamo:18,Mammarella:CCTA:2020}, requires a number of randomized samples independent of the complexity of the prediction model \NEW{(i.e.\ the dimension of the regressor)}. 


In particular, we consider that given $x\in \R^{n_x}$, an estimation $\hy$ for $y\in \R$ is provided by operator $T:\R^{n_x}\to \R$. 
That is, $$ \hy=T(x).$$ 
We assume that the operator $T$ is a given predictive model that has been designed by means of any modelling methodology (first principles, linear regression, SVM regression, neural network, etc.). 

We want to provide a probabilistic bound on the prediction error. 
More formally, we consider the \NEW{random vector} $w=(x,y)\in \R^{n_x}\times \R\subseteq \W$, with \NEW{stationary} probability distribution $\Pw$, and we aim at constructing a function $\rho:\R^{n_x}\to\R$ such that, with probability no smaller than~$(1-\delta)$,  
$$\Pw\{ |y-T(x)| \leq \rho(x)\}\geq 1-\varepsilon.$$
The method relies on the possibility of accessing random data, that is, 
observations couples $w=(x,y)$. \NEW{Note that these are new data not used to construct $T(\cdot)$.}
We show that the sample complexity of the proposed techniques (i.e. the number of observations required) does
not depend on the chosen regression model but only on the desired probabilistic levels.

The remainder of the paper is structured as follows. In Section~\ref{sec:prob:up:bound}
we propose a first \NEW{simple} result, which allows to obtain an initial probabilistic bound on the prediction error via probabilistic maximization, given a predictive model. The obtained bound, which can be computed by means of a simple algorithm, is independent of the given $x$.  Section~\ref{sec:uncertainty:quantification}, focuses on including those situations in which the expected size of the error does depend on $x$, and propose a probabilistic bound conditioned to $x$. This approach is extended in Section~\ref{sec:finite:families}  to the case when a ``family" of candidates estimators is considered. Finally, in Section~\ref{sec:kernel:prediction}, we included the possibility of designing the predictors by means of kernel methods, providing at the same time also a measure of the expected size of the error. All these approaches are illustrated by means of a running numerical example, and conclusions are drawn in Section~\ref{sec:conclusions}.
\subsection*{Notation}

Given an integer $N$, $\NATS{N}$ denotes the integers from 1 to~$N$. Given $x\in\R$, $\lfloor x\rfloor$ denotes the greatest integer no larger than $x$ and $\lceil x\rceil$ the smallest integer no smaller than $x$. 
Given integers $k,N$, with $0\leq k\leq N$, and parameter $\varepsilon\in[0,1]$,
the Binomial cumulative distribution function is denoted as
$$\Bin(k;N,\varepsilon) \doteq \Sum{i=0}{k}\conv{N}{i}\varepsilon^i(1-\varepsilon)^{N-i}.
$$
Given the measurable function $g(x,y)$ and the probability distribution $\Pw$, we denote by $\E_\W\{g(x,y)\}$ the expected value of the random variable $g(x,y)$ and by  $\E_\W\{g(x,y)|x\}$ the expected value of $g(x,y)$ conditioned to $x$. 
The following definition is borrowed from the field of order statistics \cite{Alamo:18}.

\NEW{
\begin{definition}[Generalized Max]
Given a collection  of $N$ scalars $Q=\{ q_1, q_2, \ldots, q_N\}=\{q_i\}_{i=1}^N$, and an integer $r\in \NATS{N}$, we say that $q_r^+\in Q$ is the $r$-largest value of $Q$ if there is no more than $r-1$ elements of $Q$ strictly larger than~$q_r^+$. 
\end{definition}}
\vspace{0.2cm}

\NEW{Hence, $q^+_1$ denotes the largest value in $Q$, $q_{2}^+$ the second largest one, and so on until $q_N^+$, which is equal to the smallest one. We also use the alternative notation $q^+_r=\rmax{r}\{q_i\}_{i=1}^N$.}

\section{Uncertainty quantification using probabilistic maximization} \label{sec:prob:up:bound}

\NEW{In this section we present an initial probabilistic bound for the error $y-T(x)$ based on probabilistic maximization.
Suppose that we draw $N$  independent and identically distributed (i.i.d.) samples $\{(x_i,y_i)\}_{i=1}^N$ according to distribution $\Pw$, and we denote as}
\[
q_i\doteq|y_i-T(x_i)|, \quad i\in \NATS{N}
\]
\NEW{the absolute value of the corresponding prediction errors. A well established result \cite{Tempo:Bai:Dabbene:max:97} shows that the largest value in the sequence 
$\{q_i\}_{i=1}^N$,
i.e., $q_1^+$, 
provides a probabilistic upper bound on the random variable $q=|y-T(x)|$. Formally, given $\varepsilon\in(0,1)$ and $\delta\in(0,1)$, \cite[Theorem 1]{Tempo:Bai:Dabbene:max:97} states that if 
\begin{equation}\label{eq:bai}
N\geq \frac{1}{\varepsilon}\log(\frac{1}{\delta})    
\end{equation} then, with probability no smaller than $1-\delta$,}
$$\Pw\left\{ q>q_1^+  \right\} \leq \varepsilon.$$
\NEW{It is immediate to observe that this result provides a  first simple probabilistic scheme for uncertainty quantification:} If $N$ i.i.d. samples $\{(x_i,y_i)\}_{i=1}^N$ are drawn according to $\Pw$, with $N$ satisfying \eqref{eq:bai}, then with probability at least $1-\delta$
$$\Pw\left\{ |y-T(x)| \leq q_1^+ \right\} \geq 1-\varepsilon.$$
We notice that the required sample complexity (i.e. the number of samples $N$) depends only on $\varepsilon$ and $\delta$. Moreover, no specific assumptions are required on $T(x)$ or $\Pw$. 

However, we also note that this scheme may provide extremely conservative results, especially if the support of the random variable $q=|y-T(x)|$ is not finite and $N$ is large. In fact, suppose that $y-T(x)$ is a zero mean Gaussian random variable. Then, the probabilistic upper bound obtained from $q_1^+$ will be too conservative if one of the samples $q_i=|y_i-T(x_i)|$ departs considerably from zero, which occurs with a probability that increases with $N$. We conclude that only relying on the largest observed value of $|y-T(x)|$ hinders the computation of sharp probabilistic bounds, especially for small values of $\varepsilon$ and $\delta$, leading to a large number of samples $N$.  

In order to circumvent this issue, we resort to the following result \cite[Property 3]{Alamo:18}, which states how to obtain a probabilistic upper bound of a random scalar variable by means of the notion of generalized max (see Definition 1).

\begin{property}\label{Property:Generalized:Max}
Given $\varepsilon \in (0,1)$, $\delta\in (0,1)$ and $r\geq 1$, let $N\geq r$ be such that 
\begin{equation}\label{ineq:Bin}\Bin(r-1;N,\varepsilon)
\leq \delta.
\end{equation}
Suppose that $q\in \W\subseteq \R$ is a random scalar variable with probability distribution $\Pw$. Draw $N$ i.i.d. samples $\{q_i\}_{i=1}^{N}$ from distribution $\Pw$. Then, with a probability no smaller than $1-\delta$,
\begin{equation}\label{equ:ineq:qrplus}
\Pw\{q > \NEW{\rmax{r}}\{q_i\}_{i=1}^N\}\leq \varepsilon.
\end{equation}
\end{property}


\NEW{\begin{remark}[On Property 1]
This result is proved in \cite{Alamo:18} using techniques from the field of order statistics \cite{Ahsanullah:13}.
As discussed in  \cite{Alamo:18}, this result may be alternatively derived by applying the scenario approach with discarded constraints \cite{Campi08,CalafioreSIAM10}.
Adaptations of this result have been used in the context of chance constrained optimization \cite{alamo2019safe,mammarella2021chance}, and stochastic model predictive control \cite{Mammarella:CCTA:2020,Karg:Sergio:Lucia:Alamo:2019:probabilistic,Mammarella:IFAC:2020}. 
\end{remark}}


 Several questions arise when trying to apply Property \ref{Property:Generalized:Max} to the probabilistic error quantification problem:

\noindent
{\bf Choice of $N$}:  It was proved in \cite[Corollary 1]{Alamo:15} that the constraint $\Bin(r-1;N,\varepsilon)\leq \delta$ holds if 
\begin{equation}\label{ineq:lower:bound:on:N}
    \varepsilon N\geq r-1+\log\,\frac{1}{\delta} + \sqrt{2(r-1)\log\,\frac{1}{\delta}}.
\end{equation}
Thus, given $r$, $\delta$, and $\varepsilon$, the sample size $N$ can be obtained as the smallest integer $N$ satisfying \eqref{ineq:lower:bound:on:N}. Another possibility is to compute, by means of a numerical procedure, the smallest integer $N$ satisfying $\Bin(r-1;N,\varepsilon)\leq \delta$.

\noindent
{\bf Choice of $\delta$}: Since $1-\delta$ determines the probability of the satisfaction of the probabilistic constraint (\ref{equ:ineq:qrplus}), it is important to choose $\delta$ sufficiently close to zero. In view of \eqref{ineq:lower:bound:on:N}, we have that $N$ grows logarithmically with $\frac{1}{\delta}$. This implies that significantly small values of $\delta$ (say $\delta=10^{-6}$) can be used without an excessive impact in the number of samples $N$.

\noindent
{\bf Choice of $r$}: If $r$ is chosen to be too small, then the obtained probabilistic bounds might turn to be too conservative because the obtained upper bound would be determined by a reduced number of possible {\it extreme} values.
We notice from \eqref{ineq:lower:bound:on:N} that the larger the value of $r$, the larger the number of required samples $N$. We also derive from $\eqref{ineq:lower:bound:on:N}$ that $\frac{r-1}{N}<\varepsilon$. 
A reasonable choice for $r$ with an appropriate trade off between sample complexity $N$ and sharpness of the results is $r=\left\lfloor \frac{\varepsilon N}{2}\right\rfloor$.

\noindent
{\bf Choice of $\varepsilon$}: Parameter $\varepsilon$ determines the size of the confidence interval in the uncertainty quantification process. In uncertainty quantification, values of $\varepsilon$ much smaller than $0.05$ are not frequent.

We now state a result, which has been presented in a different context in \cite{Mammarella:CCTA:2020} and \cite{mammarella2021chance}, that shows how to obtain $N$ in such a way that \eqref{ineq:Bin} is satisfied for the particular choice $r=\left\lfloor \frac{\varepsilon N}{2}\right\rfloor$.  

\begin{lemma}\label{lemma:complexity:bound:mitad:epsilon}
Given $\varepsilon\in(0,1)$ and $\delta\in(0,1)$, suppose that  $N \ge  \frac{7.47}{\varepsilon} \ln\frac{1}{\delta}$ and $r=\left\lfloor \frac{\varepsilon N}{2}\right\rfloor$. Then $\Bin(r-1;N,\varepsilon)\leq \delta$.
\end{lemma}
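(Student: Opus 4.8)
The plan is to derive the claim from the explicit sufficient condition \eqref{ineq:lower:bound:on:N} established in \cite[Corollary 1]{Alamo:15}, by substituting the specific choice $r=\lfloor \varepsilon N/2\rfloor$ and then bounding the resulting expression. (Throughout I read the $\log$ in \eqref{ineq:lower:bound:on:N} as the natural logarithm, consistent with the $\ln$ in the statement.) First I would dispose of the degenerate situation $\varepsilon N<2$, in which $r=\lfloor \varepsilon N/2\rfloor=0$ and $\Bin(r-1;N,\varepsilon)=\Bin(-1;N,\varepsilon)$ is an empty sum, hence equal to $0\le\delta$; so from here on I assume $r\ge 1$, i.e.\ $r-1\ge 0$.

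By \eqref{ineq:lower:bound:on:N} it then suffices to verify
\[
\varepsilon N\ \ge\ (r-1)+\ln\tfrac1\delta+\sqrt{2(r-1)\ln\tfrac1\delta}.
\]
The key simplification is that the right-hand side is nondecreasing in $r-1$ on $[0,\infty)$, while $r=\lfloor \varepsilon N/2\rfloor\le \varepsilon N/2$ gives $r-1\le \varepsilon N/2$. Replacing $r-1$ by $\varepsilon N/2$ can therefore only enlarge the right-hand side, so it is enough to prove the $r$-free inequality
\[
\varepsilon N\ \ge\ \frac{\varepsilon N}{2}+\ln\tfrac1\delta+\sqrt{\varepsilon N\,\ln\tfrac1\delta},
\qquad\text{i.e.}\qquad
\frac{\varepsilon N}{2}\ \ge\ \ln\tfrac1\delta+\sqrt{\varepsilon N\,\ln\tfrac1\delta}.
\]

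The final step is a one-variable estimate. Writing $s=\sqrt{\varepsilon N}$ and $a=\ln\frac1\delta>0$, the last inequality reads $s^2-2\sqrt a\,s-2a\ge 0$; this quadratic in $s$ has positive root $s=(1+\sqrt3)\sqrt a$, so it holds precisely when $s\ge(1+\sqrt3)\sqrt a$, that is, when $\varepsilon N=s^2\ge(1+\sqrt3)^2\ln\frac1\delta=(4+2\sqrt3)\ln\frac1\delta$. Since $4+2\sqrt3=7.4641\ldots<7.47$, the hypothesis $N\ge\frac{7.47}{\varepsilon}\ln\frac1\delta$ yields $\varepsilon N\ge 7.47\ln\frac1\delta\ge(4+2\sqrt3)\ln\frac1\delta$, which propagates back up the chain of sufficient conditions and gives $\Bin(r-1;N,\varepsilon)\le\delta$.

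I do not anticipate a real obstacle; the only care points are (i) justifying that substituting the upper bound $\varepsilon N/2$ for $r-1$ preserves the implication (monotonicity of both the linear and the square-root term), (ii) respecting the direction of each step, since the chain consists of successively \emph{sufficient} conditions, and (iii) the numerical verification $(1+\sqrt3)^2<7.47$, which is exactly the inequality that fixes the constant $7.47$ appearing in the statement.
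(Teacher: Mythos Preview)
Your proposal is correct and follows essentially the same approach as the paper: the paper's proof merely cites an external appendix where it is shown that $N\ge \frac{(1+\sqrt{3})^2}{\varepsilon}\ln\frac{1}{\delta}$ suffices, and then observes $(1+\sqrt{3})^2<7.47$. You have supplied precisely that derivation, starting from the sufficient condition \eqref{ineq:lower:bound:on:N}, substituting $r-1\le \varepsilon N/2$, and solving the resulting quadratic to recover the constant $(1+\sqrt{3})^2=4+2\sqrt{3}$; your treatment of the degenerate case $r=0$ is also sound.
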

\begin{proof} See the appendix of \cite{mammarella2020:CCTA:Arxiv:Version}, \NEW{where it is proved that the claim holds if $N\ge \frac{(1+\sqrt{3})^2}{\varepsilon}\ln(\frac{1}{\delta})$. The result follows from $(1+\sqrt{3})^2<7.47$.}
\end{proof}

Property \ref{Property:Generalized:Max}, along with the previous discussion on the choice of $r$, leads to Algorithm \ref{alg:probabilistic:A}, which provides a simple procedure to compute a probabilistic bound on the prediction error $y-T(x)$. 

\begin{algorithm}[H]
\caption{Probabilistic Fixed-Size Bound on  Error}
\label{alg:probabilistic:A}
\begin{algorithmic}[1]
\State \label{Alg:A:step:choice:of:N:r}
Given a predictor $T:\R^{n_x} \to \R$, and
 probability levels $\varepsilon\in(0,1)$ and $\delta\in(0,1)$, choose
\begin{equation}
N \ge  \frac{7.47}{\varepsilon} \ln\frac{1}{\delta}\quad\text{ and }\quad
r=\left\lfloor \frac{\varepsilon N}{2}\right\rfloor.
\label{N_alg}    
\end{equation}
\State Draw $N$ i.i.d. samples $\{(x_i,y_i)\}_{i=1}^N$ according to $\Pw$.
\State Compute $q_i=|y_i-T(x_i)|$, $i\in\NATS{N}$. 
\State
Return $\rho=\rmax{r} \{q_i\}_{i=1}^N$ as the probabilistic upper bound for $|y-T(x)|$.
\end{algorithmic}
\label{Algo1}
\end{algorithm}

The probabilistic guarantees of the upper bound generated with Algorithm \ref{alg:probabilistic:A} are provided in the next corollary. 
\begin{corollary}\label{corollary:A}
The output $\rho$ of Algorithm \ref{alg:probabilistic:A} satisfies, with probability no smaller than $1-\delta$, $\Pw\{|y-T(x)|> \rho\}\leq \varepsilon$.
\end{corollary}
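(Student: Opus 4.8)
The plan is to derive the corollary as an immediate consequence of Property~\ref{Property:Generalized:Max} and Lemma~\ref{lemma:complexity:bound:mitad:epsilon}, once the hypotheses of the former are verified for the quantities produced by Algorithm~\ref{alg:probabilistic:A}.

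First I would check that the choice of $N$ and $r$ made in \eqref{N_alg} meets the requirements of Property~\ref{Property:Generalized:Max}. Since $N\ge \frac{7.47}{\varepsilon}\ln\frac{1}{\delta}$ and $r=\left\lfloor \frac{\varepsilon N}{2}\right\rfloor$, Lemma~\ref{lemma:complexity:bound:mitad:epsilon} yields $\Bin(r-1;N,\varepsilon)\le\delta$, which is precisely inequality \eqref{ineq:Bin}. It then remains to confirm the side conditions $r\ge 1$ and $N\ge r$: the first holds whenever $\varepsilon N\ge 2$, which is guaranteed for the (small) values of $\delta$ relevant in uncertainty quantification, so that $\left\lfloor \varepsilon N/2\right\rfloor\ge 1$; the second holds because $r\le \varepsilon N/2 < N$ as $\varepsilon\in(0,1)$.

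Next I would introduce the scalar random variable $q\doteq|y-T(x)|$, a measurable function of $w=(x,y)\sim\Pw$, hence with a well-defined distribution on $\W\subseteq\R$. Because the pairs $\{(x_i,y_i)\}_{i=1}^N$ drawn in step~2 are i.i.d.\ according to $\Pw$, the transformed samples $q_i=|y_i-T(x_i)|$, $i\in\NATS{N}$, are i.i.d.\ realizations of $q$. Property~\ref{Property:Generalized:Max} therefore applies verbatim and gives that, with probability no smaller than $1-\delta$,
\[
\Pw\{q > \rmax{r}\{q_i\}_{i=1}^N\}\le\varepsilon .
\]
Finally I would invoke step~4 of Algorithm~\ref{alg:probabilistic:A}, which sets $\rho=\rmax{r}\{q_i\}_{i=1}^N$; substituting $q=|y-T(x)|$ and this value of $\rho$ into the displayed inequality yields $\Pw\{|y-T(x)|>\rho\}\le\varepsilon$ with probability at least $1-\delta$, which is exactly the claim.

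I do not anticipate a genuine obstacle: the argument is a composition of results already available in the excerpt. The only points requiring care are the bookkeeping needed to confirm that the algorithmic parameters satisfy every hypothesis of Property~\ref{Property:Generalized:Max} (in particular the edge case $r\ge 1$), and the observation that applying the fixed measurable map $(x,y)\mapsto|y-T(x)|$ to i.i.d.\ data preserves the i.i.d.\ property, so that the scalar version of Property~\ref{Property:Generalized:Max} is directly applicable to the sequence $\{q_i\}_{i=1}^N$.
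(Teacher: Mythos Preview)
Your proposal is correct and follows essentially the same approach as the paper's own proof: invoke Lemma~\ref{lemma:complexity:bound:mitad:epsilon} to verify that the choice of $N$ and $r$ in \eqref{N_alg} satisfies $\Bin(r-1;N,\varepsilon)\le\delta$, then apply Property~\ref{Property:Generalized:Max} to the scalar random variable $q=|y-T(x)|$ and its i.i.d.\ samples $q_i$. Your version is slightly more detailed (you explicitly check the side conditions $r\ge 1$, $N\ge r$, and note that the fixed measurable map preserves the i.i.d.\ property), but these are refinements of the same argument rather than a different route.
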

\vspace{0,25cm}

 \proof
From Lemma \ref{lemma:complexity:bound:mitad:epsilon}, we have that the values for $N$ and $r$ obtained in step \ref{Alg:A:step:choice:of:N:r} of Algorithm \ref{alg:probabilistic:A} guarantee that $B(r-1;N,\varepsilon)\leq \delta$. 
Thus, we conclude from  Property \ref{Property:Generalized:Max} 
that $\rho=\rmax{r}\{|y_i-T(x_i)|\}_{i=1}^N$ satisfies, with probability no smaller than $1-\delta$,  
$ \Pw\{|y-T(x)|> \rho\}\leq \varepsilon$.
\QED

\subsection*{Numerical example: Algorithm~1}\label{sub:section:Algorithm:A:num:example}
Consider the function 
\begin{equation}\label{equ:fun:num:example}
y=f(x,n_1,n_2)=(10+n_1)x+10 \sin(4 x)+5+n_2.\end{equation}
We assume that $x$ is a random scalar with uniform distribution in $[-2.5,2.5]$ and $n_1$, $n_2$ are random scalars drawn from zero-mean Gaussian distributions with variances \NEW{$7$} and \NEW{$3$}, respectively. Suppose that the optimal predictor $T(x)=10x+10\sin(4x)+5$ for the random scalar $y=f(x,n_1,n_2)$ is available\footnote{We address the problem of determining predictor $T(\cdot)$ in Section \ref{sec:kernel:prediction}.}. We fix the probabilistic levels to $\varepsilon=0.05$ and $\delta=10^{-6}$, which leads to $N=2,065$ and $r=51$ (see step 1 of Algorithm \ref{alg:probabilistic:A}). We draw $N$ i.i.d. samples $\{(x_i,y_i)\}_{i=1}^N$ and obtain \NEW{$\rho=10.77$}. Thus, according to Corollary \ref{corollary:A}, with probability no smaller than $1-\delta$,  $\Pw\{|y-T(x)|> \rho\} \leq \varepsilon$. 
\NEW{We notice that for this example it is not difficult to obtain the sharpest probabilistic bounds for $y-T(x)$ corresponding to a given $x$. It suffices to notice that given $x$, $y-T(x)$ is a zero-mean Gaussian random variable with variance $7x^2+3$. Thus, using standard confidence interval analysis for a scalar Gaussian variable, we obtain that
$$\Pw\{|y-T(x)|> 1.96 \sqrt{7x^2+3}\}\leq 0.05.$$ 
\NEW{Figure \ref{f:naive_approach}} shows, for a new validation set of $N$ i.i.d. samples, the (fixed size) probabilistic bounds for $y$ provided by Algorithm 1 (i.e. $\Pw\{y\in[T(x)-\rho,T(x)+\rho]  \} \geq 1-\varepsilon$), along with the exact probabilistic bounds. We notice that Algorithm 1 fails to capture the varying size of the exact probabilistic bounds. We address this issue in the next sections.}

\begin{figure}[!h]
\centering
\includegraphics[width=1\columnwidth]{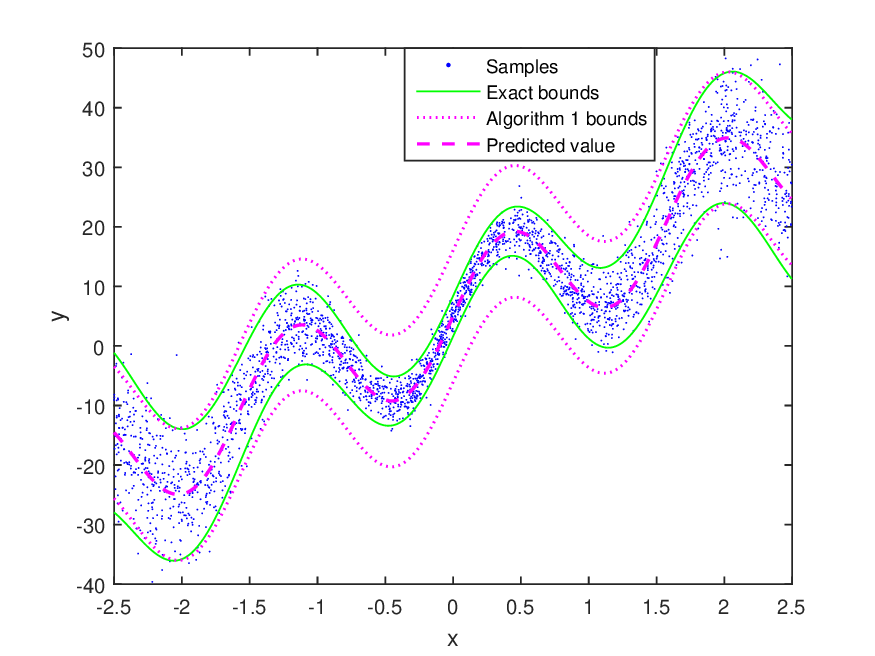}
\caption{Numerical example: \NEW{comparison between the probabilistic bounds obtained with Algorithm \ref{alg:probabilistic:A} with the exact ones. We notice that the size of the interval bounds provided by Algorithm \ref{alg:probabilistic:A} are independent of $x$}.}
\label{f:naive_approach}
\end{figure}

\section{Conditioned uncertainty quantification} \label{sec:uncertainty:quantification}

The simplicity of Algorithm \ref{alg:probabilistic:A} comes at a price: \textit{the obtained upper bound does not depend on $x$}. Clearly, this is not an issue if the error $e=y-T(x)$ is independent of $x$. However, in many situations, the expected size of the error does depend on $x$. For example, the prediction errors are often correlated with the size of the predicted variable, which in turn is correlated with $x$. From here, we infer that information on the expected error can often be obtained from~$x$.  

Under some strong assumptions, the probability distribution of $y-T(x)$ conditioned to $x$, can be computed in an explicit way. This is the case, for example, when $T(x)$ is obtained by means of Gaussian process regression \cite[\S 2]{Rasmussen:Williams:Gaussian:05} or when  exponential models are employed \cite[\S 4]{hofmann:kernel:08}. However, we notice that, although these kernel-based approaches can indeed provide estimations of the conditioned expectation $$\sigma^2(x)=\E_{\W}\{ (y-T(x))^2\,|\,x\},$$ the accuracy of the estimations will depend on the satisfaction of the underlying assumptions (i.e. Gaussian process and exponential model, respectively) and the adequate selection of the kernels (along with their hyper-parameters) used to obtain $T(x)$. There are other possibilities to obtain conditioned error quantification, like sensitivity analysis, techniques based on Fisher information matrix, bootstrapping, etc. \cite{VILLAVERDE201945}, \cite{mcclarren2018:Uncertainty:Quantification}.

We also mention here Parzen method \cite{parzen1962estimation}, which serves to estimate the probability density function of a random variable. More general multivariate kernel-based generalizations are also available (see e.g., \cite{pelletier2005kernel:density:estimation}). In these methods, an estimation $\hat{\sigma}(x)$ of $\sigma(x)$ is obtained from 
\begin{equation}\label{equ:Parzen}
    \NEW{\hat{\sigma}^2(x)= \frac{\Sum{i=1}{M} (y_i-T(x_i))^2\Gamma(x,x_i)}{\Sum{i=1}{M} \Gamma(x,x_i)}}, 
\end{equation} 
where $\Gamma:\R^{n_x} \times \R^{n_x} \to \R$ is an appropriately chosen function and $\{(x_i,y_i)\}_{i=1}^{M}$ are i.i.d. samples drawn from $\Pw$. Under non very restrictive constraints \cite{parzen1962estimation,pelletier2005kernel:density:estimation}, the provided estimation $\hat{\sigma}(x)$ converges to the actual value $\sigma(x)$ as $M$ tends to infinity. 

For a fixed value of $x$, $d=(y-T(x))^2$ is a random non-negative variable with expectation $\sigma^2(x)$. Thus, we can resort to the Markov inequality \cite{Papoulis:2002, huber2019halving:Markov:Chebyshev:Chernoff}, to obtain
$$ \Pw\{d \geq \xi \sigma^2(x)\, \}\leq \frac{1}{\xi}, \;\forall \xi >0.$$
Thus, choosing $\xi=\fracg{1}{\varepsilon}$, we obtain $ \Pw\left\{d \geq \frac{\sigma^2(x)}{\varepsilon}\right\}\leq~\varepsilon$. Equivalently,
$$ \Pw\left\{|y-T(x)| \geq \frac{\sigma(x)}{\sqrt{\varepsilon}} \right\}\leq \varepsilon.$$
The obtained probabilistic upper bound suffers from the following two limitations: (i) generally, $\sigma(x)$ is unknown and only a rough estimation is available (as the ones commented before), and (ii) Markov inequality yields overly conservative results in many situations \cite{huber2019halving:Markov:Chebyshev:Chernoff}. A meaningful exception to this is when the errors $y-T(x)$ are of Gaussian nature. In this case, using the Chi-squared distribution \cite{Papoulis:2002}, sharp probabilistic bounds of the form $\Pw\{|y-T(x)|\geq \gamma_\varepsilon \sigma(x)\}=\varepsilon$, 
can be obtained. 

In order to avoid these limitations, we can again resort to probabilistic maximization. 
Suppose that an estimation $\hat{\sigma}(x)$ of $\sigma(x)$ is available. Suppose also that $\hat{\sigma}(x)>0$, for all $x\in \R^{n_x}$. We could define the scaling factor $\gamma$ as
$$ \gamma=\frac{|y-T(x)|}{\hat{\sigma}(x)}.$$
With this definition, any probabilistic upper bound $\bar{\gamma}$ on $\gamma$ would provide a probabilistic upper bound on $|y-T(x)|$. 
That is, 
$$\Pw\{\gamma > \bar{\gamma} \}\leq \varepsilon \; \Rightarrow \Pw\{|y-T(x)| > \bar{\gamma} \hat{\sigma}(x) \}\leq \varepsilon.$$
This means that we could slightly modify Algorithm \ref{alg:probabilistic:A} to obtain a novel algorithm capable of obtaining a probabilistic upper bound conditioned by the value of $x$. This idea is implemented in Algorithm \ref{alg:probabilistic:B}.

\begin{algorithm}[H]
\caption{Conditioned Probabilistic Bound on Error}
\label{alg:probabilistic:B}
\begin{algorithmic}[1]
\State \label{Alg:B:step:choice:of:N:r}
Given a predictor $T:\R^{n_x} \to \R$, 
an estimator $\NEW{\hat{\sigma}}:\R^{n_x} \to (0,\infty)$, of $\NEW{\sqrt{\E_\W\{(y-T(x))^2\,|\,x\}}}$,  probability levels $\varepsilon\in(0,1)$ and $\delta\in(0,1)$, choose 
$$
N \ge  \frac{7.47}{\varepsilon} \ln\frac{1}{\delta}\quad\text{ and }\quad
r=\left\lfloor \frac{\varepsilon N}{2}\right\rfloor.
$$\State Draw $N$ i.i.d. samples $\{(x_i,y_i)\}_{i=1}^N$ according to $\Pw$.
\State Compute $\gamma_i=\frac{|y_i-T(x_i)|}{\hat{\sigma}(x_i)}$, $i\in\NATS{N}$. 
\State
Return $\bar{\gamma}=\NEW{\rmax{r}}\{\gamma_i\}_{i=1}^N$, as probabilistic upper bound for $\frac{|y-T(x)|}{\hat{\sigma}(x)}$.
\end{algorithmic}
\end{algorithm}

The following Corollary states the probabilistic guarantees of the output $\bar{\gamma}$ of Algorithm \ref{alg:probabilistic:B}.
\begin{corollary} \label{corollary:B}
The output $\bar{\gamma}$ of Algorithm \ref{alg:probabilistic:B} satisfies, with probability no smaller than $1-\delta$,
$$ \Pw\{|y-T(x)|> \bar{\gamma}\hat{\sigma}(x)\}\leq \varepsilon.$$
\end{corollary}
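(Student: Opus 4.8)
The plan is to mirror the proof of Corollary~\ref{corollary:A}, applying Property~\ref{Property:Generalized:Max} to the scalar random variable $\gamma \doteq |y-T(x)|/\hat{\sigma}(x)$ rather than to $q=|y-T(x)|$ directly. First I would observe that, since $\hat{\sigma}:\R^{n_x}\to(0,\infty)$ is a fixed (measurable) function that is strictly positive everywhere, $\gamma$ is a well-defined real-valued random variable whose distribution is the pushforward of $\Pw$; moreover, because $\{(x_i,y_i)\}_{i=1}^N$ are i.i.d.\ draws from $\Pw$, the quantities $\gamma_i = |y_i-T(x_i)|/\hat{\sigma}(x_i)$ computed in step~3 of Algorithm~\ref{alg:probabilistic:B} are exactly $N$ i.i.d.\ samples of $\gamma$.

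Next I would invoke Lemma~\ref{lemma:complexity:bound:mitad:epsilon}: the choice of $N$ in \eqref{N_alg}, together with $r=\left\lfloor \varepsilon N/2\right\rfloor$, guarantees $\Bin(r-1;N,\varepsilon)\leq\delta$, so the hypothesis \eqref{ineq:Bin} of Property~\ref{Property:Generalized:Max} is satisfied. Applying that property to $\gamma$ and its $N$ i.i.d.\ samples $\{\gamma_i\}_{i=1}^N$ then yields that, with probability no smaller than $1-\delta$, the returned value $\bar{\gamma}=\rmax{r}\{\gamma_i\}_{i=1}^N$ satisfies $\Pw\{\gamma>\bar{\gamma}\}\leq\varepsilon$.

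Finally I would translate this bound back to the prediction error. Since $\hat{\sigma}(x)>0$ for every $x$, multiplying the defining inequality of $\gamma$ by $\hat{\sigma}(x)$ gives the pointwise event identity $\{\gamma>\bar{\gamma}\}=\{\,|y-T(x)|>\bar{\gamma}\,\hat{\sigma}(x)\,\}$, whence $\Pw\{\,|y-T(x)|>\bar{\gamma}\,\hat{\sigma}(x)\,\}=\Pw\{\gamma>\bar{\gamma}\}\leq\varepsilon$ on the same probability-$(1-\delta)$ event, which is the claim.

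I do not expect a genuine obstacle here; the only point requiring a line of care is the well-definedness of $\gamma$ — namely that dividing by $\hat{\sigma}(x)$ preserves the i.i.d.\ structure of the samples, and that the \emph{strict} positivity of $\hat{\sigma}$ is what makes the final event equivalence exact (a mere $\hat{\sigma}\geq 0$ would not suffice). Everything else is a direct specialization of Property~\ref{Property:Generalized:Max}, exactly as in Corollary~\ref{corollary:A}. \QED
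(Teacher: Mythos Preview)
Your proposal is correct and follows essentially the same approach as the paper's proof: apply Property~\ref{Property:Generalized:Max} to the scalar $\gamma=|y-T(x)|/\hat{\sigma}(x)$, use Lemma~\ref{lemma:complexity:bound:mitad:epsilon} to verify the sample-size condition, and then rewrite the event $\{\gamma>\bar{\gamma}\}$ as $\{|y-T(x)|>\bar{\gamma}\hat{\sigma}(x)\}$. Your additional care about the strict positivity of $\hat{\sigma}$ and the i.i.d.\ structure of the $\gamma_i$ is sound and does not depart from the paper's line of argument.
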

\vspace{0,25cm}

\proof The proof follows the same lines as the proof of Corollary \ref{corollary:A}. That is, we infer from Property \ref{Property:Generalized:Max} and Lemma \ref{lemma:complexity:bound:mitad:epsilon} that the proposed choice of $N$ and $r$ guarantees that, with probability no smaller than $1-\delta$,  
$$ \Pw\left\{\gamma = \frac{|y-T(x)|}{\hat{\sigma}(x)} > \bar{\gamma}\right\} \leq \varepsilon.$$
Thus, we conclude $ \Pw\{|y-T(x)|> \bar{\gamma}\hat{\sigma}(x)\} \leq \varepsilon$. \QED
\vspace{0,25cm}

\begin{remark}[On normalization of $\hat{\sigma}(x)$]\label{remark:normalization}
We notice that the upper bound obtained by means of Algorithm \ref{alg:probabilistic:B} provides identical results when the estimator $\hat{\sigma}(x)$ is replaced by a scaled version $\hat{\sigma}_\xi(x)=\xi\hat{\sigma}(x)$, where $\xi>0$. Thus, multiplicative errors in the estimation of $\sigma(x)$ are corrected in an implicit way by the algorithm.   
\end{remark}

\NEW{\begin{remark}[Difference with convex scenario approaches]
Scenario approaches (see e.g. \cite{campi2009interval,garatti2019class}) obtain both the estimator and probabilistic guarantees in a single optimization problem that requires a number of samples that increases both with the dimension of the regressor used in the predictive model and the number of samples that are allowed to violate the interval predictions. Our approach can be applied to any given predictor $T(\cdot)$ and has a sample complexity that does not depend on the dimension of the regressor. This allows us to consider kernel approaches in a possible infinite dimensional lifted space (see Section \ref{sec:kernel:prediction}).
\end{remark}}

\section{Uncertainty quantification for finite families of estimators} \label{sec:finite:families}

The probabilistic bounds proposed for error $e=y-T(x)$ depend not only on the intrinsic random relationship between $x$ and $y$ (joint probability distribution), but also on the choice of the estimators $T(\cdot)$ and $\hat{\sigma}(\cdot)$. Since there exists a myriad of possibilities for choosing $T(\cdot)$ and $\hat{\sigma}(\cdot)$, we now analyze the problem of choosing among a finite family $\F$ of possible pairs $(T(\cdot),\hat{\sigma}(\cdot))$ the one that minimizes the size of the obtained probabilistic bounds. The following result states the relationship between the cardinality $n_\F$ of $\F$, and the probabilistic specifications $(\varepsilon, \delta)$, with the number of samples required to obtain the corresponding bounds. 

\begin{theorem}
\label{theorem:families}
Consider the finite family of candidate estimators
$$ \F=\set{(T_j(\cdot), \hat{\sigma}_j(\cdot))}{j\in \NATS{n_\F}},$$
where $T_j:\R^{n_x}\to \R$ and $\hat{\sigma}_j:\R^{n_x} \to (0,\infty)$ for every $j\in\NATS{n_\F}$.
Given $\varepsilon \in (0,1)$, $\delta\in (0,1)$ and $r\geq 1$, let $N\geq r$ be such that $\Bin(r-1;N,\varepsilon)\leq \frac{\delta}{n_\F}$.
Draw $N$ i.i.d. samples $\{(x_i,y_i)\}_{i=1}^{N}$ from distribution $\Pw$ and denote \begin{equation}\label{equ:def:bar:gamma:j}
\bar{\gamma}_j \doteq \NEW{\rmax{r}}\left\{\fracg{|y_i-T_j(x_i)|}{\hat{\sigma}_j(x_i)}\right\}_{i=1}^{N},\; j\in\NATS{n_\F}.
\end{equation}
Then, with a probability no smaller than $1-\delta$,
$$ \Pw\{|y-T_j(x)| > \bar{\gamma}_j \hat{\sigma}_j(x)\}\leq \varepsilon, \; j\in \NATS{n_\F}.$$ 
\end{theorem}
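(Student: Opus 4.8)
The plan is to reduce Theorem~\ref{theorem:families} to Property~\ref{Property:Generalized:Max} applied once per member of the family, and then combine the resulting failure events with a union bound. First I would fix an arbitrary index $j\in\NATS{n_\F}$ and consider the scalar random variable $\gamma^{(j)}\doteq |y-T_j(x)|/\hat\sigma_j(x)$, which is well defined because $\hat\sigma_j(x)>0$ for all $x$. Drawing the $N$ i.i.d. samples $\{(x_i,y_i)\}_{i=1}^N$ from $\Pw$ induces $N$ i.i.d. samples $\gamma^{(j)}_i = |y_i-T_j(x_i)|/\hat\sigma_j(x_i)$ of $\gamma^{(j)}$. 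Since by hypothesis $N\geq r$ and $\Bin(r-1;N,\varepsilon)\leq \delta/n_\F \leq \delta$, Property~\ref{Property:Generalized:Max} applies with confidence parameter $\delta/n_\F$ in place of $\delta$: with probability no smaller than $1-\delta/n_\F$ we have $\Pw\{\gamma^{(j)} > \bar\gamma_j\}\leq \varepsilon$, where $\bar\gamma_j = \rmax{r}\{\gamma^{(j)}_i\}_{i=1}^N$ is exactly the quantity defined in \eqref{equ:def:bar:gamma:j}. Because $\hat\sigma_j(x)>0$, the event $\{\gamma^{(j)}>\bar\gamma_j\}$ coincides with $\{|y-T_j(x)| > \bar\gamma_j\hat\sigma_j(x)\}$, so the per-$j$ conclusion reads: with probability at least $1-\delta/n_\F$, $\Pw\{|y-T_j(x)| > \bar\gamma_j\hat\sigma_j(x)\}\leq\varepsilon$.

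The second step is the union bound over $j\in\NATS{n_\F}$. Let $E_j$ denote the ``bad'' event (over the draw of the $N$ samples) that $\Pw\{|y-T_j(x)| > \bar\gamma_j\hat\sigma_j(x)\} > \varepsilon$. Step one gives $\PwN\{E_j\}\leq \delta/n_\F$ for each $j$. Hence $\PwN\big(\bigcup_{j=1}^{n_\F} E_j\big) \leq \sum_{j=1}^{n_\F}\PwN\{E_j\} \leq n_\F\cdot (\delta/n_\F) = \delta$. Taking complements, with probability no smaller than $1-\delta$ none of the bad events occurs, i.e. $\Pw\{|y-T_j(x)| > \bar\gamma_j\hat\sigma_j(x)\}\leq\varepsilon$ holds simultaneously for all $j\in\NATS{n_\F}$, which is precisely the claimed statement. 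One should note that the same sample set $\{(x_i,y_i)\}_{i=1}^N$ is reused for every $j$; this introduces no difficulty because the union bound requires no independence among the events $E_j$, only that each individually has probability at most $\delta/n_\F$.

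I do not expect a genuine obstacle here — the argument is a textbook confidence-budget-splitting / union-bound construction on top of Property~\ref{Property:Generalized:Max}. The one point that deserves a careful sentence rather than a hand-wave is the equivalence of events $\{\gamma^{(j)}>\bar\gamma_j\} = \{|y-T_j(x)|>\bar\gamma_j\hat\sigma_j(x)\}$, which relies essentially on the standing assumption that the estimators $\hat\sigma_j$ are strictly positive (division by $\hat\sigma_j(x)$ preserves the inequality). A secondary point worth stating explicitly is that Property~\ref{Property:Generalized:Max} is being instantiated with its $\delta$ replaced by $\delta/n_\F$, which is legitimate since the hypothesis $\Bin(r-1;N,\varepsilon)\leq\delta/n_\F$ is exactly the condition \eqref{ineq:Bin} of that property with this smaller confidence parameter. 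Everything else is bookkeeping.
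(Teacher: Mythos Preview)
Your proposal is correct and follows essentially the same approach as the paper's own proof: apply Property~\ref{Property:Generalized:Max} to each index $j$ with confidence parameter $\delta/n_\F$, then use a union bound over the $n_\F$ failure events. The paper's argument is slightly more compact (it writes the union bound as $\PwN\{\varepsilon < \max_j E_j(\bar\gamma_j)\}\leq \sum_j \PwN\{\varepsilon < E_j(\bar\gamma_j)\}$), but the logic is identical to yours.
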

\vspace{0.25cm}

\proof Denote $\delta_\F$ the probability that at least one of the randomly obtained scalars  $\{\bar{\gamma}_j\}_{j=1}^{n_\F}$, obtained from the random multi-sample $\{(x_i,y_i)\}_{i=1}^N$, does not satisfy the constraint 
\begin{equation} \label{ineq:Ej:varpesilon} E_j(\bar{\gamma}_j) \doteq \Pw\left\{\fracg{|y-T_j(x)|}{\hat{\sigma}_j(x)} > \bar{\gamma}_j \right\} \leq \varepsilon.
\end{equation}
Thus, 
\begin{eqnarray*}
\delta_\F &=& \PwN \{ \varepsilon < \max\limits_{j\in \NATS{n_\F}} E_j (\bar{\gamma}_j) \} 
 \\
& \leq &  \Sum{j=1}{n_\F} \PwN \{ \varepsilon < E_j (\bar{\gamma}_j) \}  \leq \Sum{j=1}{n_\F}\frac{\delta}{n_\F} = \delta.
\end{eqnarray*}
We notice that the last inequality is due to the assumption $B(r-1;N,\varepsilon)\leq \frac{\delta}{n_\F}$,  \eqref{equ:def:bar:gamma:j} and Property \ref{Property:Generalized:Max}. 
Thus, with probability no smaller than $1-\delta_\F \geq 1-\delta$, inequality \eqref{ineq:Ej:varpesilon} is satisfied for every $j\in \NATS{n_\F}$. \hfill \QED

\NEW{\begin{remark}[Sample complexity for finite families]
In view of Lemma \ref{lemma:complexity:bound:mitad:epsilon}, it suffices to draw $N=\left\lceil \frac{7.47}{\varepsilon} \log\frac{n_\F}{\delta}\right\rceil$ i.i.d. samples from $\Pw$ to obtain a probabilistic uncertainty quantification for the complete finite family $\F$. In order to select the best pair  $(T_j(\cdot),\hat{\sigma}_j(\cdot))$ in $\F$, one could choose the index $j\in\NATS{n_\F}$ providing the sharpest probabilistic uncertainty bounds. That is, the one minimizing $\Sum{i=1}{N} \bar{\gamma}_j \hat{\sigma}_j(x_i)$. Since $n_\F$ enters in a logarithmic way in the sample complexity bound, large values for $n_\F$ are affordable. In this case, the search for the most appropriate pair $(T_j(\cdot),\hat{\sigma}_j(\cdot))$ does not need to be exhaustive, and sub-optimal search in the finite family $\F$ could be envisaged (since the probabilistic bounds provided are valid for every member of the family $\F$).
\end{remark}}



\section{Kernel Central Prediction and Uncertainty Quantification} \label{sec:kernel:prediction}

Suppose that $M$ i.i.d. samples $\{(x_i,y_i)\}_{i=1}^M$ are available. We now address the design of the predictor $T(x)$ by means of kernel methods while guaranteeing that the procedure also provides us with an estimation of $\sigma(x)$. Given $x$, let us define the loss functional
\begin{equation}\label{equ:def:J:theta}
    J(\theta;x)= \theta^T \Sigma_\theta\theta + \Sum{i=1}{M}(y_i-\theta\T\varphi(x_i))^2\Gamma(x,x_i),
\end{equation}
where $\varphi:\R^{n_x}\to \R^{n_\theta}$ is the regressor function and $\theta\T \Sigma_\theta \theta$ is a regularization term. A possible choice is $\Sigma_\theta=\tau \Id$, where $\tau>0$. Finally, $\Gamma:\R^{n_x} \times \R^{n_x} \to \R$ is an appropriately chosen weighting function. We assume that  $\Gamma(x,z)$ is a decreasing function of $\|x-z\|$, where $\|\cdot\|$ is a given norm. For example, $\Gamma(x,z)=\exp(-\lambda \|x-z\|)$,   where $\lambda>0$. 

As it is usual in machine learning, \NEW{for given $x$}, a {\it central} estimation for $y$ is provided by $T(x)=\theta_c\T(x)\varphi(x)$, where $\theta_c(x)$ is given by $\theta_c(x) = \arg\min\limits_\theta J(\theta;x)$.
We notice that the proposed estimator is a weighted least square estimator with a ridge regression regularization term \cite{hoerl1970ridge,hastie2009elements}. 

There exists two possibilities to obtain predictor $T(x)$ and local estimations $\hat{y}_i(x)=\theta_c\T(x)\varphi(x_i)$, $i\in\NATS{M}$ (which will be needed to compute the Parzen estimator for $\sigma(x)$):\\
\vspace{0.2cm}
\noindent

{\bf Based on $\varphi(\cdot)$}: Since $J(\theta;x)$ is a strictly convex quadratic function of $\theta$, the optimal value $\theta_c(x)$ can be obtained determining the value of $\theta$ for which the gradient of $J(\theta;x)$ with respect to $\theta$ vanishes.\\
\vspace{0.2cm}

\noindent
 {\bf Based on a kernel formulation}: Defining the kernel function $K(\cdot,\cdot)$ as $K(x_a,x_b)=\varphi\T (x_a)\Sigma_\theta^{-1} \varphi\T(x_b)$, \NEW{the estimation $T(x)$, along with the local estimations $\hat{y}_i(x)$, $i\in\NATS{M}$, can be obtained in an explicit way by means of well-known kernel tricks (see e.g., \cite[\S 14.4.3]{murphy2012machine,vovk2013kernel} and references therein)}. In this case, the kernel formulation allows to approach the regression problem in a possibly infinite dimensional lifted space \cite{hastie2009elements}.

Once the local estimations $\{\hat{y}_i(x)\}$, $i\in\NATS{M}$ have been computed, the estimation for $\sigma(x)$ can be obtained from the following local Parzen estimator:
\begin{equation} \label{eq:Parzen:Kernel}\NEW{\hat{\sigma}^2(x)= \frac{\Sum{i=1}{M} (y_i-\hat{y}_i(x))^2\Gamma(x,x_i)}{\Sum{i=1}{M} \Gamma(x,x_i)}}. \end{equation}
See the Appendix for a detailed description on how to obtain predictors $T(x)$ and $\hat{\sigma}(x)$ for both considered possibilities (i.e. based on regressor $\varphi(\cdot)$ or based on a kernel formulation).

\NEW{As commented before, the Parzen estimator converges, under non very restrictive assumptions, to the actual value $\sigma(x)$ as $M$ tends to infinity (\cite{parzen1962estimation,pelletier2005kernel:density:estimation}). A too reduced number of samples $M$, or a non appropriate choice for weighting factors $\Gamma(\cdot,\cdot)$, may translate into a degraded estimation of $\sigma(x)$, which will not affect the probabilistic properties of the obtained bounds (that are guaranteed by Theorem \ref{theorem:families}), but will lead to more conservative bounds.} We also notice that an additional set of $N$ i.i.d samples is required to compute the scaling factor $\bar{\gamma}$ in Algorithm \ref{alg:probabilistic:B}.  

\section{Numerical example: Kernel finite families}
We revisit now the numerical example proposed in Section \ref{sec:prob:up:bound}. For the predictor $T(\cdot)$ we consider a radial basis function kernel $k(x_a,x_b)=50\NEW{\exp(-\frac{|x_a-x_b|^2}{0.2})}$, and for the estimator $\hat{\sigma}(x)$ the Parzen estimator in \eqref{eq:Parzen:Kernel}, 
where $M=2,065$ and the pairs $\{(\tilde{y}_i,\tilde{x}_i)\}_{i=1}^M$ are i.i.d. samples from $\Pw$.  We consider a family of weighting functions $\Gamma(x,z)=\NEW{\exp(-\lambda |x-z|)}$, where \NEW{$\lambda\in\NATS{10}$}. Thus, the finite family $\F$ consists of each of the $n_\F=10$ possible pairs $(T_j(\cdot),\hat{\sigma}_j(\cdot))$ that can be obtained with the  $n_\F$ values considered for the hyper-parameter \NEW{$\lambda$} using the methodology proposed in Section \ref{sec:kernel:prediction}. Setting $\varepsilon=0.05$, $\delta=10^{-6}$ and $n_\F=10$, we obtain from \NEW{Theorem}~\ref{theorem:families} and Lemma \ref{lemma:complexity:bound:mitad:epsilon} that the choice $N=2407$ and $r=60$ is sufficient to obtain a probabilistic uncertainty quantification valid for all the members of the family. The value of \NEW{$\lambda$} minimizing the size of the obtained probabilistic bounds is attained at \NEW{$\lambda=1$}. The resulting scaling parameter is \NEW{$\bar{\gamma}=2.15$}. \NEW{See Figure \ref{f:kernel:families} for a comparison of the results obtained for the same validation set that was used to generate Figure \ref{Algo1}. The ratio of violation in the validation set for the proposed finite family approach was 0.0332, whereas it was 0.0511 for the exact probabilistic bounds.}

\begin{figure}[t]
\centering
\includegraphics[trim= .8cm 0cm 1.2cm 0.5cm, clip=true,width=1\columnwidth]{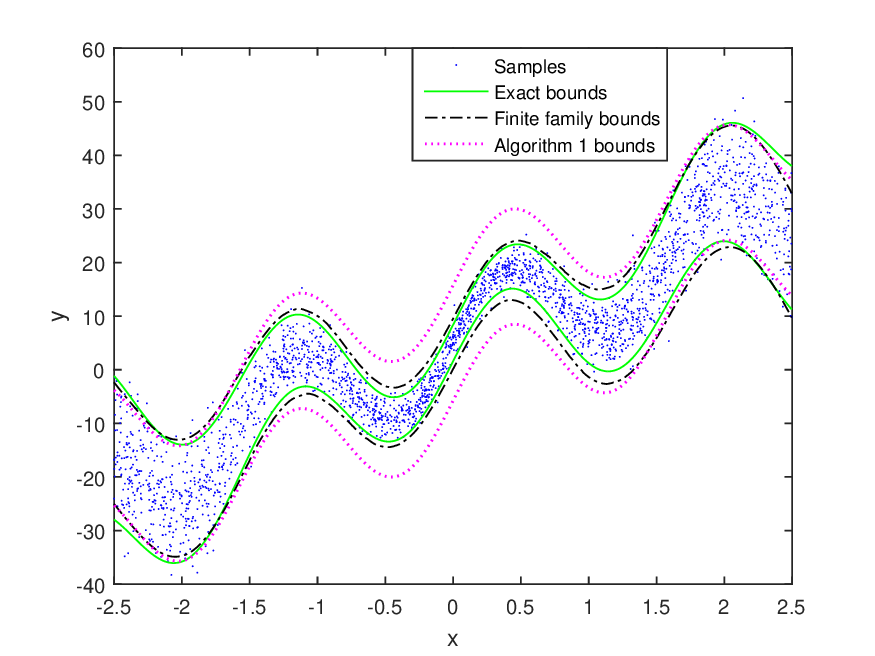}
\caption{Probabilistic upper bounds obtained by means of a finite family of kernel estimators, Algorithm 1 bounds and exact bounds. We notice that the new probabilistic bounds are modulated by the estimated value $\hat{\sigma}(x)$.}
\label{f:kernel:families}
\end{figure}


\section{Conclusions}  
\label{sec:conclusions}
In this paper, we proposed a methodology to obtain a probabilistic upper bound on the absolute value of the prediction error via a sample-based approach.  
We provided a series of approaches of increasing complexity. All the proposed techniques share the desirable characteristic of requiring a number of observations which is independent of the prediction model complexity.
This is made possible by the exploitation of a probabilistic scaling scheme.

\section*{Fundings}  
This work was supported  by the Agencia Estatal de Investigaci\`on (AEI)-Spain under Grant PID2019-106212RB-C41/AEI/10.13039/501100011033 and partially funded by the Italian IIT and MIUR within the 2017 PRIN (N. 2017S559BB).

\bibliographystyle{plain}
\bibliography{BIBLIO.bib}

\section*{Appendix: Computation of estimators $T(x)$ and $\hat{\sigma}(x)$}

Given a specific test point $x$, and a given regressor function
$\varphi(\cdot)$, the local ridge regression is $T(x)=\varphi(x)\T \theta_c(x)$, where $\theta_c(x)$ is the minimizer of \eqref{equ:def:J:theta}. 
That is, 
$$ \theta_c(x) = \arg\min\limits_\theta\,\theta^T \Sigma_\theta\theta + \Sum{i=1}{M}(y_i-\theta\T\varphi(x_i))^2\Gamma(x,x_i).$$

We first notice that in some local regression approaches, $\Gamma(x,x_i)$ is set to zero if $x_i$ does not belong to a neighbourhood of $x$. Similarly, function $\Gamma(x,\cdot)$ could be tuned in such a way that only $L\leq M$ samples  $x_i$ satisfy $\Gamma(x,x_i)>0$ (e.g. those  closest to $x$). This sort of strategies are specially relevant in kernel methodologies because, as it will be shown later in this appendix, their implementation requires the solution of a system of equations in $L$ variables. Hence, the complexity of kernel approaches can be kept to affordable levels by adjusting the design parameter $L$.

Since $\Gamma(x,x_i)=0$ implies that the pair $(x_i,y_i)$ has no effect on the value of $\theta_c(x)$, we will consider only the pairs $(x_i,y_i)$ for which $\Gamma(x,x_i)>0$. We denote such pairs as $\{\tx_j,\ty_j\}_{j=1}^L$, where $1\leq L\leq M$. Thus,
$$ \theta_c(x) = \arg\min\limits_\theta\;\theta^T \Sigma_\theta\theta + \Sum{j=1}{L}(\ty_j-\theta\T\varphi(\tx_j))^2\Gamma(x,\tx_j).$$
For notational convenience, we denote 
\begin{eqnarray*} \varphi_j & \doteq & \varphi(\tx_j),\; j=1,\ldots,L,\\
\Gamma_j & \doteq & \Gamma(x,\tx_j),\; j=1,\ldots,L.
\end{eqnarray*}
We first address the case in which regressor function $\varphi(\cdot)$ is available. Later, we address the situation in which the estimators are obtained in terms of a kernel formulation, i.e., when the estimators are not directly expressed in terms of a regressor function, but of a kernel function.

From $(\ty_j-\varphi_j\T \theta)^2=\theta\T \varphi_j\varphi_j\T \theta-2\ty_j(\theta\T\varphi_j)+\ty_j^2$, 
we obtain that $\theta_c(x)$ is the minimizer of 
$$ \theta\T \left( \Sigma_\theta + \Sum{j=1}{L} \Gamma_j\varphi_j\varphi_j\T \right)\theta- 2\theta\T \left( \Sum{j=1}{L}\Gamma_j\ty_j\varphi_j\right). $$
That is, 
\begin{eqnarray}
\theta_c(x) &=& \left( \Sigma_\theta + \Sum{j=1}{L} \Gamma_j\varphi_j\varphi_j\T \right)^{-1} \left( \Sum{j=1}{L}\Gamma_j\ty_j\varphi_j\right)\nonumber \\
&= &(\Sigma_\theta+ R D R\T )^{-1} R D\vv{y}_\D, \label{equ:theta:noKernel}
\end{eqnarray}
where $R\in \R^{n_\theta\times L}$, $D\in \R^{L\times L}$ and $\vv{y}_\D\in \R^L$ are given by  
\begin{eqnarray*}
R&=&\bmat{cccc} \varphi_1  & \varphi_2 & \ldots & \varphi_L\emat,\\
\vv{\ty}_\D &=& \bmat{cccc} \ty_1 & \ty_2 & \ldots & \ty_L\emat\T,\\ 
D&=&\diag \left(\Gamma_1, \Gamma_2, \ldots, \Gamma_L \right)>0.
\end{eqnarray*}

Thus, the estimator of $y$, given $x$, is  
\begin{eqnarray*}T(x)&=& \varphi(x)\T \theta_c(x)\\ &=& 
\varphi(x)\T (\Sigma_\theta+ R D R\T )^{-1} R D\vv{y}_\D.
\end{eqnarray*}
We also define the local errors 
$$
\tilde{e}_j(x)=\ty_j - \varphi_j\T \theta_c(x) , \;j=1,\ldots, L.
$$

The resulting estimator $\hat{\sigma}(x)$ (see equation \ref{eq:Parzen:Kernel}) is

$$\hat{\sigma}^2(x)  =  \frac{\Sum{j=1}{L}\Gamma_j\tilde{e}_j^2(x) }{\Sum{j=1}{L} \Gamma_j}. $$
Given $\vv{e}\in \R^L$, we denote $\|\vv{e}\|_D=\sqrt{\vv{e}\T D \vv{e}}$. With this notation we obtain
\begin{eqnarray} \hat{\sigma}(x) &=& \frac{1}{\sqrt{\trace{D}}} \left \| \bv  \tilde{e}_1(x) \\ \tilde{e}_2(x) \\
\vdots \\ \tilde{e}_L(x)\ev \right\|_D \label{equ:ref:e:D}\\
& = & \frac{1}{\sqrt{\trace D}}\left\| \bv  \ty_1 - \varphi_1\T \theta_c(x)\\ \ty_2-\varphi_2\T \theta_c(x) \\ 
\vdots \\ \ty_L-\varphi_L\T \theta_c(x)\ev \right\|_D \nonumber\\
& = & \frac{1}{\sqrt{\trace D}}\left\| (\Id- R\T (\Sigma_\theta+ R D R\T )^{-1} R D)\vv{y}_\D\right\|_D.\nonumber\end{eqnarray}

Since the weighting factors $\Gamma(x,\cdot)$ depend on $x$, $\theta_c(x)$ depends on $x$. Thus, the proposed procedure has to be repeated each time estimators $T(\bar{x})$ and $\hat{\sigma}(\bar{x})$ are required for a particular test point $\bar{x}$.  

We recall now the following well known matrix equality \cite[Subsection 1.3]{InversionLemma:Henderson:81}, \cite[Corollary 4.3.1]{murphy2012machine}:
$$(H - RF^{-1} R\T )^{-1} RF^{-1}  =H^{-1} R(F-R\T H^{-1}R)^{-1},$$
which is valid whenever $H$ and $F$ are non singular matrices.
In view of this equality, we obtain from \eqref{equ:theta:noKernel} the following expression for $\theta_c(x)$
\begin{eqnarray*}	
	\theta_c(x) &= &(\Sigma_\theta+ R D R\T )^{-1} R D\vv{y}_\D\\
	&= &-((-\Sigma_\theta)- R D R\T )^{-1} R D\vv{y}_\D\\
	&= & -(-\Sigma_\theta)^{-1} R(D^{-1}- R\T (-\Sigma_\theta)^{-1}R)^{-1}\vv{y}_\D\\ 
	&=&\Sigma_\theta^{-1} R(D^{-1}+ R\T \Sigma_\theta^{-1}R)^{-1}\vv{y}_\D.
\end{eqnarray*}
Thus, given $x$ and $\varphi_x=\varphi(x)$, we obtain the following estimation $T(x)$ for $y$
\begin{eqnarray*}
T(x) &=& \varphi_x\T\theta_c(x) \\
&=& 	\varphi_x\T 
\Sigma_\theta^{-1} R(D^{-1}+ R\T \Sigma_\theta^{-1}R)^{-1}\vv{y}_\D \\
& = &\bv \varphi_x\T \Sigma_\theta^{-1} \varphi_1 \\ \varphi_x\T \Sigma_\theta^{-1}\varphi_2\\  \vdots\\ \varphi_x\T \Sigma_\theta^{-1} \varphi_L \ev\T \left(D^{-1}+\mathbb{K} \right)^{-1} \vv{y}_\D, 
\end{eqnarray*}
where 
\begin{eqnarray*}
\mathbb{K} &=& R\T \Sigma_\theta^{-1} R \\
&=& \bv \varphi_1\T\\ \varphi_2\T \\ \vdots\\ \varphi_L\T\ev \Sigma_\theta^{-1}\bmat{cccc} \varphi_1& \varphi_2 &\ldots& \varphi_L\emat \\
&= & \bmat{cccc} \varphi_1 \T \Sigma_\theta^{-1}\varphi_1 & \varphi_1 \T \Sigma_\theta^{-1}\varphi_2 & \ldots & \varphi_1 \T \Sigma_\theta^{-1}\varphi_L \\
	\varphi_2 \T \Sigma_\theta^{-1}\varphi_1 & \varphi_2 \T \Sigma_\theta^{-1}\varphi_2 & \ldots & \varphi_2 \T \Sigma_\theta^{-1}\varphi_L \\
		\vdots & \vdots & \ddots & \vdots \\
	\varphi_L \T \Sigma_\theta^{-1}\varphi_1 & \varphi_L \T \Sigma_\theta^{-1}\varphi_2 & \ldots & \varphi_L \T \Sigma_\theta^{-1}\varphi_L  \emat.
\end{eqnarray*}
If we now define the kernel function $K(\cdot,\cdot)$ as 
$$K(x_a,x_b)=\varphi\T (x_a)\Sigma_\theta^{-1} \varphi\T(x_b),$$ we obtain
\begin{equation}\label{equ:kernel:estimatior:s}
    T(x) = \bv K(x,\tx_1)\\ K(x,\tx_2)\\  \vdots\\ K(x,\tx_L)  \ev\T \left(D+\mathbb{K} \right)^{-1} \vv{y}_\D, 
\end{equation} 
where 
$$ \mathbb{K}=\bmat{cccc} K(\tx_1,\tx_1) & K(\tx_1,\tx_2) & \ldots & K(\tx_1,\tx_L) \\
		K(\tx_2,\tx_1) & K(\tx_2,\tx_2) & \ldots & K(\tx_2,\tx_L) \\
		\vdots & \vdots & \ddots & \vdots \\
		K(\tx_L,\tx_1) & K(\tx_L,\tx_2) & \ldots & K(\tx_L,\tx_L) \emat.
$$
The kernel function must satisfy the Meyer's condition, i.e. matrix $\mathbb{K}$ should be semidefinite positive for any collection of $L$ points \cite{scholkopf2002learningwithKernels}. Popular kernel functions satisfying this condition are:

	\begin{itemize}
		\item Linear: $K(x_a,x_b) =x_a\T x_b $.
		\item Polynomial: $K(x_a,x_b) = (c+x_a\T x_b )^d$.
		\item Radial  : $K(x_a,x_b) = \exp\left(\frac{-\|x_a-x_b\|^2}{d^2}\right)$.
		\item Sigmoidal:  $K(x_a,x_b)=\tanh (c_a x_a\T x_b + c_b)$.
	\end{itemize}
The local errors $\{\tilde{e}_j\}_{j=1}^L$ are obtained from
\begin{eqnarray*}
\tilde{e}_j(x) &=& \ty_j-\varphi_j\T \theta_c(x) \\
& = & \ty_j - 	\varphi_j\T 
\Sigma_\theta^{-1} R(D^{-1}+ R\T \Sigma_\theta^{-1}R)^{-1}\vv{y}_\D \\
& = & \ty_j- \bv \varphi_j\T \Sigma_\theta^{-1} \varphi_1 \\ \varphi_j\T \Sigma_\theta^{-1}\varphi_2\\  \vdots\\ \varphi_j\T \Sigma_\theta^{-1} \varphi_L \ev\T \left(D^{-1}+\mathbb{K} \right)^{-1} \vv{y}_\D, \\ &= &
\ty_j- \bv K(\tx_j, \tx_1 )\\ K(\tx_j, \tx_2 )\\  \vdots\\ K(\tx_j, \tx_L ) \ev\T \left(D^{-1}+\mathbb{K} \right)^{-1} \vv{y}_\D.
\end{eqnarray*}
Thus,  we obtain
\begin{eqnarray*}
\bv  \tilde{e}_1(x) \\ \tilde{e}_2(x) \\
\vdots \\ \tilde{e}_L(x)\ev &=&\left( \Id-\mathbb{K} \left(D+\mathbb{K} \right)^{-1} \right)\vv{y}_\D.   \end{eqnarray*}
We finally conclude from equation \eqref{equ:ref:e:D} that
$$ \hat{\sigma}(x) = \frac{1}{\sqrt{\trace D}} \left\| \left( \Id-\mathbb{K} \left(D+\mathbb{K} \right)^{-1} \right)\vv{y}_\D\right\|_D.  $$

\end{document}